\newenvironment{proof*}[1][\proofname]{\par
  \pushQED{\qed}%
  \normalfont \partopsep=\z@skip \topsep=\z@skip
  \trivlist
  \item[\hskip\labelsep
        \itshape
    #1\@addpunct{.}]\ignorespaces
}{%
  \popQED\endtrivlist\@endpefalse
}
\date{}
\begin{document} 

%\centerline{\bf Journal's Title, Vol. x, 20xx, no. xx, xxx - xxx} 
\centerline{\bf Applied Mathematical Sciences, Vol. x, 2015, no. xx, xxx - xxx}

\centerline{\bf HIKARI Ltd, \ www.m-hikari.com}

\centerline{} 

\centerline{} 

\centerline {\Large{\bf On Generalized Fibonacci Numbers\footnote{to appear by April 2015}}} 

\centerline{} 

\centerline{\bf {Jerico B. Bacani$^*$ and Julius Fergy T. Rabago$^{\dagger}$}} 

\centerline{} 

\centerline{Department of Mathematics and Computer Science} 

\centerline{College of Science} 

\centerline{University of the Philippines Baguio} 

\centerline{Baguio City 2600, Philippines} 

\centerline{$^*$jicderivative@yahoo.com, $^{\dagger}$jfrabago@gmail.com}

\newtheorem{theorem}{Theorem}[section]
\newtheorem{lemma}[theorem]{Lemma}
\newtheorem{corollary}[theorem]{Corollary}
\newtheorem{proposition}[theorem]{Proposition}
\newtheorem{definition}[theorem]{Definition} 
\newtheorem{example}[theorem]{Example}
\newtheorem{remark}[theorem]{Remark}

\centerline{}

{\footnotesize Copyright $\copyright$ 2015 Jerico B. Bacani and Julius Fergy T. Rabago. This is an open access article distributed under the Creative Commons Attribution License, which permits unrestricted use, distribution, and reproduction in any medium, provided the original work is properly cited.}

\begin{abstract} 
We provide a formula for the $n^{th}$ term of the $k$-generalized Fibonacci-like number sequence using the $k$-generalized Fibonacci number or $k$-nacci number, and by utilizing the newly derived formula, we show that the limit of the ratio of successive terms of the sequence tends to a root of the equation $x + x^{-k} = 2$. We then extend our results to $k$-generalized Horadam ($k$GH) and $k$-generalized Horadam-like ($k$GHL) numbers. In dealing with the limit of the ratio of successive terms of $k$GH and $k$GHL, a lemma due to Z. Wu and H. Zhang \cite{wu} shall be employed. Finally, we remark that an analogue result for $k$-periodic $k$-nary Fibonacci sequence can also be derived.\\
\end{abstract} 

%{\bf Subject Classification:}  \\ 

{\bf Mathematics Subject Classification:} 11B39, 11B50.\\

{\bf Keywords:} k-generalized Fibonacci numbers, k-generalized Fibonacci-like numbers, k-generalized Horadam numbers, k-generalized Horadam-like numbers, convergence of sequences

%\end{abstract}
\section{Introduction}
A well-known recurrence sequence of order two is the widely studied \emph{Fibonacci sequence} $\{F_n\}_{n=1}^{\infty}$, which is defined recursively by the recurrence relation
	\begin{equation}\label{fibo}F_1=F_2=1,\quad F_{n+1}=F_n+F_{n-1}\quad(n\geq 1).\end{equation}
Here, it is conventional to define $F_0=0$. 

In the past decades, many authors have extensively studied the Fibonacci sequence and its various generalizations (cf. \cite{dresden, edson, lewis, koshy, noe}). We want to contribute more in this topic, so we present our results on the \emph{$k$-generalized Fibonacci numbers} or \emph{$k$-nacci numbers} and of its some generalizations. In particular, we derive a formula for the \emph{$k$-generalized Fibonacci-like sequence} using $k$-nacci numbers.
% (\emph{see} Section 2). 

Our work is motivated by the following statement:
%------------------------------- MOTIVATION --------------------------------
Consider the set of sequences satisfying the relation $\mathcal{S}_n =\mathcal{S}_{n-1} + \mathcal{S}_{n-2}$. Since the sequence $\{\mathcal{S}_n\}$ is closed under term-wise addition (resp. multiplication) by a constant, it can be viewed as a vector space. Any such sequence is uniquely determined by a choice of two elements, so the vector space is two-dimensional. If we denote such sequence as $(\mathcal{S}_0, \mathcal{S}_1)$, then the Fibonacci sequence $F_n = (0,1)$ and the shifted Fibonacci sequence $F_{n-1} = (1, 0)$ are seen to form a canonical basis for this space, yielding the identity:
	\begin{equation}\label{canonical}
		\mathcal{S}_n = \mathcal{S}_1F_n+\mathcal{S}_0F_{n-1}
	\end{equation}
for all such sequences $\{\mathcal{S}_n\}$. For example, if $\mathcal{S}$ is the Lucas sequence $2, 1, 3, 4, 7,\ldots$, then we obtain $\mathcal{S}_n\colon\!\!\!=L_n = 2F_{n-1} + F_n$. 

One of our goals in this paper is to find an analogous result of the equation \eqref{canonical} for $k$-generalized Fibonacci numbers. The result is significant because it provides an explicit formula for the $n^{th}$ term of a $k$-nacci-like (resp. $k$-generalized Horadam and $k$-generalized Horadam-like) sequences without the need of solving a system of equations.
% (note that when solving for an explicit solution of a recurrence sequence in a usual fashion the solution always proceeds in solving a system of equations caused by forcing the general solution to satisfy some prescribed initial values). 
%-------------------------------------------------------------------------------
By utilizing the formula, we also show that the limit of the ratio of successive terms of a $k$-nacci sequence tends to a root of the equation $x + x^{-k} = 2$. We then extend our results to $k$-generalized Horadam and $k$-generalized Horadam-like sequences. We also remark that an analogue result for $k$-periodic $k$-nary Fibonacci sequences can be derived. 
 %%%%%%%%%%%%%%%%%%%%%%%%
%%%%%		SECTION 1		%%%%%%%
 %%%%%%%%%%%%%%%%%%%%%%%%
 \section{Fibonacci-like sequences of higher order}
We start off this section with the following definition.
\begin{definition}
		Let $n\in\mathbb{N} \cup \{0\}$ and $k \in \mathbb{N} \backslash \{1\}$. Consider the sequences $\{F^{(k)}_n\}_{n=0}^{\infty}$ and $\{G^{(k)}_n\}_{n=0}^{\infty}$ having the following properties:
		
		\begin{equation}\label{FN}
		F^{(k)}_n = 
		\left\{
			\begin{array}{ll}
			0, & 0\leq n < k-1;\\

			1, & n=k-1;\\

			\sum_{i=1}^k F^{(k)}_{n-i}, & n > k-1,
			\end{array}
		\right.
\end{equation}
and 
	\begin{equation}\label{FLN}
		G^{(k)}_n = 
		\left\{
			\begin{array}{ll}
			G^{(k)}_n, & 0\leq n \leq k-1;\\

			\sum_{i=1}^k G^{(k)}_{n-i}, & n > k-1,
			\end{array}
		\right.
	\end{equation}
and $G^{(k)}_n \ne 0$ for some $n \in [0, k-1]$. The terms $F^{(k)}_n$ and $G^{(k)}_n$ satisfying \eqref{FN} and \eqref{FLN} are called the \emph{$n^{th}$ $k$-generalized Fibonacci} number or $n^{th}$ $k$-step Fibonacci number (cf. \cite{noe}), and \emph{$n^{th}$ $k$-generalized Fibonacci-like} number, respectively.
\end{definition}

For $\{F^{(k)}_n\}_{n=0}^{\infty}$, some famous sequences of this type are the following:
%For some values of $k$ we have the following well-known sequences and its first few non-zero terms:
	\begin{center}
	\begin{tabular}{c|c|c|}
		$k$& name of sequence & first few terms of the sequence\\ \hline \hline
		2	& Fibonacci	& $0, 1, 1, 2, 3, 5, 8, 13, 21, 34, \ldots$\\\hline
		3	& Tribonacci & $0, 0, 1, 1, 2, 4, 7, 13, 24, 44, 81 ,\ldots$\\\hline
		4	& Tetranacci & $0, 0, 0, 1, 1, 2, 4, 8, 15, 29, 56, 108,\ldots$\\\hline
		5	& Pentanacci& $0, 0, 0, 0, 1, 1, 2, 4, 8, 16, 31, 61, 120,\ldots$\\
	\end{tabular}
 	\end{center} 

By considering the sequences $\{F^{(k)}_n\}_{n=0}^{\infty}$ and $\{G^{(k)}_n\}_{n=0}^{\infty}$ we obtain the following relation.

\begin{theorem}\label{FG} Let $F^{(k)}_n$ and $G^{(k)}_n$ be the $n^{th}$ $k$-generalized Fibonacci and $k$-generalized Fibonacci-like numbers, respectively. Then, for all natural numbers $n \geq k,$
	\begin{equation}\label{GF}
		G^{(k)}_n = G^{(k)}_0 F^{(k)}_{n-1}+\sum_{m=0}^{k-3}\left(G^{(k)}_{m+1}\sum_{j=0}^{m+1} F^{(k)}_{n-1-j}\right) + G^{(k)}_{k-1}\;F^{(k)}_n.
	\end{equation}
\end{theorem}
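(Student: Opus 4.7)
The plan is to prove \eqref{GF} by strong induction on $n \geq k$, exploiting the linearity of both sides of the identity in the initial data $G^{(k)}_0,\ldots,G^{(k)}_{k-1}$ together with the $k$-nacci recurrence $F^{(k)}_m = \sum_{i=1}^k F^{(k)}_{m-i}$, which by \eqref{FN} is valid for $m \geq k$.

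For the base case $n=k$, I would use $F^{(k)}_0 = \cdots = F^{(k)}_{k-2} = 0$ together with $F^{(k)}_{k-1} = F^{(k)}_k = 1$ to collapse each inner sum $\sum_{j=0}^{m+1} F^{(k)}_{k-1-j}$ appearing in \eqref{GF} to the single nonzero term $F^{(k)}_{k-1} = 1$. The right-hand side then reduces to $G^{(k)}_0 + G^{(k)}_1 + \cdots + G^{(k)}_{k-1}$, which equals $G^{(k)}_k$ by the recurrence \eqref{FLN}.

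For the inductive step with $n > k$, I would use \eqref{FLN} to write $G^{(k)}_n = \sum_{i=1}^k G^{(k)}_{n-i}$, substitute the inductive hypothesis into each $G^{(k)}_{n-i}$ with $n-i \geq k$ (and keep the remaining $G^{(k)}_{n-i}$ as initial conditions), interchange the orders of summation, and collect the coefficient of each $G^{(k)}_j$. Each such coefficient is a sum of shifted $F^{(k)}$-values which, by the $F^{(k)}$-recurrence, telescopes back to the coefficient appearing on the right-hand side of \eqref{GF} at $n$, completing the step.

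The main obstacle is the bookkeeping in the mixed regime $k < n < 2k$, where some $G^{(k)}_{n-i}$ in the recurrence are still initial values rather than outputs of the formula, and where some $F^{(k)}$-indices that appear in the telescoping step fall below $k$ (where the $F^{(k)}$-recurrence does not apply). The cleanest workaround is to treat $n = k, k+1, \ldots, 2k-1$ as a block of base cases, each dispatched just as $n=k$ was by exploiting the sparsity of $F^{(k)}_m$ for $m \leq k-2$ and the small explicit values $F^{(k)}_k = 1$, $F^{(k)}_{k+1} = 2$, and so on; the inductive step then runs uniformly for $n \geq 2k$, where every invocation of the $F^{(k)}$-recurrence has index $\geq k$.
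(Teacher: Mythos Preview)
Your approach is essentially the paper's: strong induction on $n$, expanding $G^{(k)}_{n}$ via the recurrence \eqref{FLN}, substituting the induction hypothesis into each summand, and collapsing the resulting sums of $F^{(k)}$-terms using the $k$-nacci recurrence. The paper is actually less careful than you in the low-index regime---it simply asserts that \eqref{GF} is ``obviously valid for $n<k$'' and then runs a single uniform inductive step---so your plan to verify $n=k,\ldots,2k-1$ directly before inducting from $n\geq 2k$ is a more scrupulous version of the same argument rather than a different route.
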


\begin{proof}
We prove this using induction on $n$. Let $k$ be fixed. Equation \eqref{GF} is obviously valid for $n < k$. Now, suppose \eqref{GF} is true for $n \geq r \geq k$ where $r \in \mathbb{N}$.  Then, 
	\begin{align*}
	G^{(k)}_{r+1} 
		&= \sum_{i=1}^k G^{(k)}_{(r+1)-i}\\
		&= G^{(k)}_0  \sum_{i=1}^k F^{(k)}_{(r+1)-i-1}+\sum_{m=0}^{k-3}\left(G^{(k)}_{m+1}\sum_{j=0}^{m+1} \left( \sum_{i=1}^kF^{(k)}_{(r+1)-i-1-j}\right)\right)\\ 
		& \hspace{1in} + G^{(k)}_{k-1} \left( \sum_{i=1}^k F^{(k)}_{(r+1)-i}\right)\\
		&= G^{(k)}_0 F^{(k)}_{(r+1)-1}+\sum_{m=0}^{k-3}\left(G^{(k)}_{m+1}\sum_{j=0}^{m+1} F^{(k)}_{(r+1)-1-j}\right) + G^{(k)}_{k-1} F^{(k)}_{r+1}.
	\qedhere \end{align*}
%This proves the theorem.
\end{proof}
%%%%%%%%%%%%% IMPORTANT REMARK %%%%%%%%%%%%%
\begin{remark}
	Using the formula obtained by G. P. B. Dresden (cf. {\normalfont\cite[Theorem 1]{dresden}}), we can now express $G^{(k)}_n$ explicitly in terms of $n$ as follows:
		\small{
		\[
			G^{(k)}_n = G^{(k)}_0 \sum_{i=1}^kA(i;k)\alpha_i^{n-2}+\sum_{m=0}^{k-3}\left(G^{(k)}_{m+1}\sum_{j=0}^{m+1} \sum_{i=1}^kA(i;k)\alpha_i^{n-2-j}\right) + G^{(k)}_{k-1}\sum_{i=1}^kA(i;k)\alpha_i^{n-1},
		\]}
	where $A(i;k)=(\alpha_i-1)[2+(k+1)(\alpha_i-2)]^{-1}$ and $\alpha_1,\alpha_2,\ldots,\alpha_k$ are roots of $x^k-x^{k-1}-\cdots-1=0$. Another formula of Dresden for $F^{(k)}_n$ (cf. \cite[Theorem 2]{dresden}) can 
	also be used to express $G^{(k)}_n$ explicitly in terms of $n$. More precisely, we have
		\small{
		\begin{align*}
			G^{(k)}_n &= G^{(k)}_0 \sum_{i=1}^k\text{Round}\left[A(k)\alpha_i^{n-2}\right]
							+	\sum_{m=0}^{k-3}\left(G^{(k)}_{m+1}\sum_{j=0}^{m+1} \sum_{i=1}^k\text{Round}\left[A(k)\alpha_i^{n-2-j}\right]\right) \\
					&\hspace{3in}		+ G^{(k)}_{k-1}\sum_{i=1}^k\text{Round}\left[A(k)\alpha_i^{n-1}\right],
		\end{align*}
		}
		where $A(k)=(\alpha-1)[2+(k+1)(\alpha-2)]^{-1}$ for all $n\geq2-k$ and for $\alpha$ the unique positive root of $x^k-x^{k-1}-\cdots-1=0$.
\end{remark}

%%%%%%%%%%%%%%%%%%%%%%%%%%%%%%%%%%%%%%%%%%%%%%%%%%%%%%%%%%%%%%%%%%%%%%%%%%%%%%%%%%%%%%%%%%%%%%%%%%%%%%%%%%%%%%
% EXTENDING TO HORADAM NUMBERS
%%%%%%%%%%%%%%%%%%%%%%%%%%%%%%%%%%%%%%%%%%%%%%%%%%%%%%%%%%%%%%%%%%%%%%%%%%%%%%%%%%%%%%%%%%%%%%%%%%%%%%%%%%%%%%
\subsection*{Extending to Horadam numbers}
%{\bf EXTENDING TO HORADAM NUMBERS}\\
In 1965, A. F. Horadam \cite{horadam} defined a second-order linear recurrence sequence \\ 
$\{W_n(a,b;p,q)\}_{n=0}^{\infty},$ or simply $\{W_n\}_{n=0}^{\infty}$ by the recurrence relation
	\[
		W_0=a, \quad W_1=b, \quad W_{n+1} = pW_n + qW_{n-1},\quad (n\geq2).
 	\] 
The sequence generated is called the \emph{Horadam\rq{}s sequence} which can be viewed easily as a certain generalization of $\{F_n\}$. The $n^{th}$ Horadam number $W_n$ with initial conditions $W_0=0$ and $W_1=1$ can be represented by the following Binet\rq{}s formula: 
	\[
		W_n(0,1;p,q)=\frac{\alpha^n-\beta^n}{\alpha-\beta} \quad(n\geq2),
	\]
where $\alpha$ and $\beta$ are the roots of the quadratic equation $x^2-px-q=0$, i.e. $\alpha=(p+\sqrt{p^2+4q})/2$ and $\beta=(p-\sqrt{p^2+4q})$. We extend this definition to the concept of $k$-generalized Fibonacci sequence and we define the $k$-generalized Horadam (resp. Horadam-like) sequence as follows:
% , which is just an analogue of the sequence $\{F^{(k)}_n\}_{n=0}^{\infty}$ (resp. $\{G^{(k)}_n\}_{n=0}^{\infty}$) as follows: 
\begin{definition}\label{KHS}
	Let $q_i \in \mathbb{N}$ for $i \in \{1,2,\ldots,n\}$. For $n \geq k$, the $n^{th}$ $k$-generalized Horadam sequence, denoted by $\{\mathcal{U}_n^{(k)}(0,\ldots,1; q_1,\ldots,q_k)\}_{n=0}^{\infty},$ or simply $\{\mathcal{U}_n^{(k)}\}_{n=0}^{\infty},$ is a sequence whose $n^{th}$ term is obtained by the recurrence relation
	\begin{equation}\label{KH}
		\mathcal{U}^{(k)}_n 
			=  q_1\mathcal{U}^{(k)}_{n-1} + q_2\mathcal{U}^{(k)}_{n-2} + \cdots + q_k\mathcal{U}^{(k)}_{n-k}
			=\sum_{i=1}^k q_i\mathcal{U}^{(k)}_{n-i},
	\end{equation}
with initial conditions $\mathcal{U}^{(k)}_i =0$ for all $0 \leq i < k-1$ and $\mathcal{U}^{(k)}_{k-1}=1.$  Similarly, the $k$-generalized Horadam-like sequence, denoted by
$\{\mathcal{V}_n^{(k)}(a_0,\ldots,a_{k-1}; q_1,\ldots,q_k)\}_{n=0}^{\infty}$ or simply $\{\mathcal{V}_n^{(k)}\}_{n=0}^{\infty}$, has the same recurrence relation given by equation \eqref{KH} but with initial conditions $\mathcal{V}^{(k)}_i =a_i$ for all $0 \leq i \leq k-1$ where $a_{i}{}\rq{}s \in \mathbb{N} \cup \{0\}$ with at least one of them is not zero. 
\end{definition}

It is easy to see that when $q_1=\cdots=q_k=1$, then $\mathcal{U}_n^{(k)}(0,\ldots,1; 1,\ldots,1)=F^{(k)}_n$ and $\mathcal{V}_n^{(k)}(a_0,\ldots,a_{k-1}; 1,\ldots,1) = G^{(k)}_n$. Using Definition \ref{KHS} we obtain the following relation, which is an analogue of equation \eqref{GF}.

\begin{theorem}\label{UKVK}
	Let $\mathcal{U}^{(k)}_n$ and $\mathcal{V}^{(k)}_n$ be the $n^{th}$ $k$-generalized Horadam and $n^{th}$ $k$-generalized Horadam-like numbers, respectively. Then, for all $n \geq k,$
		\begin{equation}\label{VKN}
		\mathcal{V}^{(k)}_n = q_k\mathcal{V}^{(k)}_0 \mathcal{U}^{(k)}_{n-1}+\sum_{m=0}^{k-3}\left(\mathcal{V}^{(k)}_{m+1}\sum_{j=0}^{m+1} q_{k-(m+1)+j}\;\mathcal{U}^{(k)}_{n-1-j}\right) + \mathcal{V}^{(k)}_{k-1}\;\mathcal{U}^{(k)}_n.
		\end{equation}
\end{theorem}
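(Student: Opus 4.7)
The plan is to adapt the induction used in the proof of Theorem~\ref{FG} almost verbatim, with the uniform weights $1$ replaced by the coefficients $q_i$ of the recurrence~(\ref{KH}). The guiding observation is that the $k$-generalized Horadam sequence $\{\mathcal{U}^{(k)}_n\}$ satisfies the same linear recurrence as $\{\mathcal{V}^{(k)}_n\}$, so each time the identity~(\ref{VKN}) is expanded, the weights $q_i$ factor out cleanly and the inner sums collapse to a shifted $\mathcal{U}^{(k)}_n$. Induction on $n$ starting at $n=k$ is therefore the natural approach.

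For the base case $n=k$, I would use $\mathcal{U}^{(k)}_{k-1}=1$ and $\mathcal{U}^{(k)}_i=0$ for $0\le i<k-1$ to collapse the formula. In the middle double sum, the inner sum over $j$ contributes only at $j=0$ (all other $\mathcal{U}^{(k)}_{k-1-j}$ vanish), producing $q_{k-(m+1)}\mathcal{V}^{(k)}_{m+1}$; the last term becomes $\mathcal{V}^{(k)}_{k-1}\mathcal{U}^{(k)}_k=q_1\mathcal{V}^{(k)}_{k-1}$. Summing, the right-hand side reduces to $\sum_{i=1}^{k}q_i\mathcal{V}^{(k)}_{k-i}$, which is exactly $\mathcal{V}^{(k)}_k$ by~(\ref{KH}).

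For the inductive step, assume~(\ref{VKN}) is valid for all indices up to $r\ge k$. Apply the recurrence to get $\mathcal{V}^{(k)}_{r+1}=\sum_{i=1}^{k}q_i\mathcal{V}^{(k)}_{r+1-i}$, substitute the hypothesis into each term, and interchange the $i$-summation with the $m$- and $j$-summations. One then recognizes inside each piece an expression of the form $\sum_{i=1}^{k}q_i\,\mathcal{U}^{(k)}_{(r+1)-i-s}$, which by the defining recurrence for $\mathcal{U}^{(k)}$ equals $\mathcal{U}^{(k)}_{(r+1)-s}$. Performing this collapse at the three types of terms ($s=1$ in the leading term, $s=1+j$ in the double sum, $s=0$ in the last term) reproduces the right-hand side of~(\ref{VKN}) at $n=r+1$, which is exactly what Theorem~\ref{FG}'s proof does in the special case $q_i\equiv 1$.

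The main obstacle is bookkeeping rather than conceptual: ensuring that the shifted weight $q_{k-(m+1)+j}$ on the left of the recurrence collapse is not altered when the $i$-sum is carried out, and that every application of the $\mathcal{U}^{(k)}$-recurrence is at an index $\ge k$. The latter point is the one place the weighted version is slightly more delicate than the unweighted one: for $r$ close to $k$, some indices $r-j$ with $j\le k-2$ can fall below $k$, so the induction is cleanest if one either verifies the extra base cases $n=k,k+1,\ldots,2k-1$ directly or argues abstractly that both sides of~(\ref{VKN}), as sequences in $n$, satisfy the same $k$-th order linear recurrence~(\ref{KH}), whence agreement on any $k$ consecutive indices starting at $n=k$ forces agreement for all $n\ge k$.
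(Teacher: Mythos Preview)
Your proposal is correct and follows exactly the approach the paper indicates: the paper's proof of Theorem~\ref{UKVK} simply states that it ``uses mathematical induction and is similar to the proof of Theorem~\ref{FG},'' and your argument is precisely that adaptation, with the uniform weights replaced by the $q_i$. Your attention to the low-index issue in the inductive step (handling $r+1-i<k$ either by extra base cases or by the linear-recurrence uniqueness argument) is in fact more careful than the paper's own treatment.
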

\begin{proof}
The proof uses mathematical induction and is similar to the proof of Theorem \ref{FG}.
\end{proof}
%%%%%%%%%%%%%%%%%%%%%%%%%%%%%%%%%%%%%%%%%%%%%%%%%%%%%%%%%%%%%%%%%%%%%%%%%%%%%%%%%%%%%%%%%%%%%%%%%%%%%%%%%%%%%%
% CONVERGENCE PROPERTY
%%%%%%%%%%%%%%%%%%%%%%%%%%%%%%%%%%%%%%%%%%%%%%%%%%%%%%%%%%%%%%%%%%%%%%%%%%%%%%%%%%%%%%%%%%%%%%%%%%%%%%%%%%%%%%
\subsection*{Convergence properties}
%{\bf CONVERGENCE PROPERTIES}\\

In the succeeding discussions, we present the convergence properties of the sequences $\{F^{(k)}_n\}^{\infty}_{n=0}, \{G^{(k)}_n\}^{\infty}_{n=0},\{\mathcal{U}^{(k)}_n\}^{\infty}_{n=0}$, and $ \{\mathcal{V}^{(k)}_n\}^{\infty}_{n=0}$. First, it is known (e.g. in \cite{noe}) that $\lim_{n\rightarrow \infty} F_n^{(k)}/F_{(n-1)}^{(k)}=\alpha,$ where $\alpha$ is a \emph{$k$-nacci constant}. This constant is the unique positive real root of $x^k-x^{k-1}-\cdots-1=0$ and can also be obtained by solving the zero of the polynomial $x^k(2-x)-1$. Using this result, we obtain the following:

\begin{theorem}\label{GK}
	\begin{equation}
	\lim_{n\rightarrow \infty} G^{(k)}_n/ G^{(k)}_{n-1} = \alpha,
	\end{equation}
where $\alpha$ the unique positive root of  $x^k-x^{k-1}-\cdots-1=0.$
\end{theorem}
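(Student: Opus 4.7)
\bigskip

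\noindent\textbf{Proof proposal.} My plan is to take Theorem \ref{FG} as a ready-made closed form for $G^{(k)}_n$ in terms of the $k$-nacci numbers and then pass to the limit term by term, using the known convergence $F^{(k)}_n/F^{(k)}_{n-1}\to\alpha$. The whole argument reduces to a careful bookkeeping of how the ratios $F^{(k)}_{n-1-j}/F^{(k)}_{n-1}$ behave as $n\to\infty$.

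First I would form the quotient
\[
\frac{G^{(k)}_n}{G^{(k)}_{n-1}}=\frac{G^{(k)}_0F^{(k)}_{n-1}+\sum_{m=0}^{k-3}G^{(k)}_{m+1}\sum_{j=0}^{m+1}F^{(k)}_{n-1-j}+G^{(k)}_{k-1}F^{(k)}_{n}}{G^{(k)}_0F^{(k)}_{n-2}+\sum_{m=0}^{k-3}G^{(k)}_{m+1}\sum_{j=0}^{m+1}F^{(k)}_{n-2-j}+G^{(k)}_{k-1}F^{(k)}_{n-1}}
\]
coming from Theorem \ref{FG}. I would then divide numerator and denominator by the common quantity $F^{(k)}_{n-1}$. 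From the iterated application of $F^{(k)}_n/F^{(k)}_{n-1}\to\alpha$ (and hence $F^{(k)}_{n-\ell}/F^{(k)}_{n-1}\to\alpha^{1-\ell}$ for every fixed integer $\ell$), the numerator tends to
\[
N:=G^{(k)}_0+\sum_{m=0}^{k-3}G^{(k)}_{m+1}\sum_{j=0}^{m+1}\alpha^{-j}+G^{(k)}_{k-1}\alpha,
\]
while the denominator tends to
\[
D:=G^{(k)}_0\alpha^{-1}+\sum_{m=0}^{k-3}G^{(k)}_{m+1}\sum_{j=0}^{m+1}\alpha^{-1-j}+G^{(k)}_{k-1}=\alpha^{-1}N.
\]

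Assuming $N\neq 0$, we immediately obtain $G^{(k)}_n/G^{(k)}_{n-1}\to N/D=\alpha$, which is the desired conclusion. The one point that needs verification, and which I expect to be the only real obstacle, is precisely the positivity of $N$. This follows from the standing hypothesis that the initial values $G^{(k)}_0,\dots,G^{(k)}_{k-1}$ are nonnegative with at least one strictly positive, combined with $\alpha>1>0$, so that every summand in $N$ is nonnegative and at least one is strictly positive. I would also briefly note that the same positivity argument propagates through the recurrence to ensure $G^{(k)}_{n-1}\neq 0$ for all sufficiently large $n$, so the ratio is well defined from some point on. Apart from this, the proof is pure substitution plus taking limits in a finite linear combination, which is permitted because the index set of the outer sum in \eqref{GF} is finite and independent of $n$.
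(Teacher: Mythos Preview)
Your proposal is correct and follows essentially the same route as the paper: substitute the expression from Theorem~\ref{FG} into the ratio, divide numerator and denominator by $F^{(k)}_{n-1}$, use $F^{(k)}_{n-\ell}/F^{(k)}_{n-1}\to\alpha^{1-\ell}$, and observe that the resulting denominator is $\alpha^{-1}$ times the numerator. In fact you are slightly more careful than the paper, which simply writes the final equality without commenting on why the limiting denominator is nonzero; your remark that $N>0$ (and that $G^{(k)}_{n-1}\neq 0$ eventually) fills a small gap the paper leaves implicit. One caveat: Definition~2.1 only requires that some initial value be nonzero, not that all be nonnegative, so your positivity argument for $N$ tacitly assumes the nonnegative setting used later for the Horadam-like sequences---but the paper's own proof is subject to the same unstated assumption.
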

\begin{proof}
The proof is straightforward. Letting $n\rightarrow \infty$ in $ G^{(k)}_n/ G^{(k)}_{n-1}$ we have
	\begin{align*}
		\lim_{n\rightarrow \infty} G^{(k)}_n/ G^{(k)}_{n-1} 
		&= 
			\lim_{n\rightarrow \infty} 
			\left[\frac{G^{(k)}_0 F^{(k)}_{n-1}
			+\sum_{m=0}^{k-3}\left(G^{(k)}_{m+1}\sum_{j=0}^{m+1} F^{(k)}_{n-1-j}\right) + G^{(k)}_{k-1}\;F^{(k)}_n}{G^{(k)}_0 F^{(k)}_{n-2}
			+\sum_{m=0}^{k-3}\left(G^{(k)}_{m+1}\sum_{j=0}^{m+1} F^{(k)}_{n-2-j}\right) + G^{(k)}_{k-1}\;F^{(k)}_{n-1}}
			\right]\\
		&= 
			\lim_{n\rightarrow \infty} 
			\left[\frac{G^{(k)}_0
			+\sum_{m=0}^{k-3}\left(G^{(k)}_{m+1}\sum_{j=0}^{m+1} \frac{F^{(k)}_{n-1-j}}{F^{(k)}_{n-1}}\right) + G^{(k)}_{k-1}\;\frac{F^{(k)}_n}{F^{(k)}_{n-1}}}{G^{(k)}_0 \frac{F^{(k)}_{n-2}}{F^{(k)}_{n-1}}
			+\sum_{m=0}^{k-3}\left(G^{(k)}_{m+1}\sum_{j=0}^{m+1} \frac{F^{(k)}_{n-2-j}}{F^{(k)}_{n-1}}\right) + G^{(k)}_{k-1}}
			\right]\\
		&= 
			\frac{G^{(k)}_0
			+\sum_{m=0}^{k-3}\left(G^{(k)}_{m+1}\sum_{j=0}^{m+1} \alpha^{-j}\right) + \alpha G^{(k)}_{k-1}}{\alpha^{-1} G^{(k)}_0 
			+\sum_{m=0}^{k-3}\left(G^{(k)}_{m+1}\sum_{j=0}^{m+1} \alpha^{-(j+1)}\right) + G^{(k)}_{k-1}}\\
		&= \alpha.
\qedhere \end{align*} \end{proof}
Now, to find the limit of $\mathcal{U}^{(k)}_n/ \mathcal{U}^{(k)}_{n-1}$ (resp. $\mathcal{V}^{(k)}_n/ \mathcal{V}^{(k)}_{n-1}$) as $n \rightarrow \infty$ we need the following results due to Wu and Zhang \cite{wu}. Here, it is assumed that the $q_{i}{}\rq{}s$ satisfy the inequality $q_i \geq q_j \geq 1$ for all $j \geq i$, where $1\leq i,j \leq k$ with $2\leq k \in\mathbb{N}.$ 

\begin{lemma}\label{Q1}$\cite{wu}$
Let $q_1, q_2, \ldots , q_k$ be positive integers with $q_1 \geq q_2 \geq \cdots \geq q_k \geq 1$ and $k \in \mathbb{N} \backslash \{1\}$. Then, the polynomial 
	\begin{equation}
		f(x)= x^k - q_1x^{k-1} - q_2x^{k-2} - \cdots - q_{k-1}x-q_k,
	\end{equation}
	\begin{enumerate}
		\item[(i)] has exactly one positive real zero $\alpha$ with $q_1 < \alpha < q_1 + 1;$ and
		\item[(ii)] its other $k - 1$ zeros lie within the unit circle in the complex plane.
	\end{enumerate}
\end{lemma}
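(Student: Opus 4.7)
The proof naturally splits along the two claims. For (i), I would first compute $f(q_1) = -\sum_{i=2}^k q_i q_1^{k-i} < 0$ by direct evaluation, and then bound $f(q_1+1)$ from below. Using the identity $(q_1+1)^k = q_1(q_1+1)^{k-1} + (q_1+1)^{k-1}$ together with the hypothesis $q_i \leq q_1$ for $i \geq 2$ collapses the remaining terms via a telescoping geometric-sum estimate to give $f(q_1+1) \geq 1 > 0$. The Intermediate Value Theorem then produces a zero in $(q_1, q_1+1)$, and Descartes' rule of signs --- there is exactly one sign change in the coefficient sequence $(1, -q_1, -q_2, \ldots, -q_k)$ --- guarantees uniqueness of the positive real zero.

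For (ii), the obstacle is that the direct triangle-inequality argument on $f(\beta) = 0$ only yields $|\beta| \leq \alpha$, which is far from the desired $|\beta| < 1$. My plan is to transfer the problem to a polynomial with positive, strictly decreasing coefficients and invoke the Enestr\"om--Kakeya theorem. Exploiting the identity $\sum_{i=1}^k q_i \alpha^{-i} = 1$ (which is $f(\alpha)=0$ rescaled), one derives
\[
\frac{x^k}{\alpha^k}\,f\!\left(\frac{\alpha}{x}\right) \;=\; 1 - \sum_{i=1}^k \frac{q_i}{\alpha^i} x^i \;=\; \sum_{i=1}^k \frac{q_i}{\alpha^i}(1 - x^i) \;=\; (1-x)\,g(x),
\]
where $g(x) = \sum_{j=0}^{k-1} b_j x^j$ with $b_j = \sum_{i=j+1}^k q_i/\alpha^i$. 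The $k-1$ non-dominant roots $\beta$ of $f$ correspond to the $k-1$ roots $z$ of $g$ via $z = \alpha/\beta$, so establishing $|\beta| < 1$ is equivalent to showing every zero of $g$ satisfies $|z| > \alpha$.

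The crux is then the inequality $b_j/b_{j+1} > \alpha$ for each $j = 0, 1, \ldots, k-2$. Since $b_j - b_{j+1} = q_{j+1}/\alpha^{j+1}$, this reduces to $q_{j+1}/\alpha^{j+1} > (\alpha-1)\,b_{j+1}$, and the hypothesis $q_{j+1+m} \leq q_{j+1}$ combined with the strict bound $\sum_{m=1}^{k-j-1} \alpha^{-m} < 1/(\alpha-1)$ delivers it immediately. Thus $b_0 = 1 > b_1 > \cdots > b_{k-1} = q_k/\alpha^k > 0$, with every consecutive ratio exceeding $\alpha$, and the Enestr\"om--Kakeya theorem yields $|z| \geq \min_j b_j/b_{j+1} > \alpha$ for every root $z$ of $g$. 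Unwinding the substitution, every root $\beta \neq \alpha$ of $f$ satisfies $|\beta| = \alpha/|z| < 1$. The main obstacle is locating the correct transformation; once $g$ is in hand, the monotonicity of the $q_i$ makes the Enestr\"om--Kakeya hypotheses automatic.
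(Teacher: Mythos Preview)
The paper does not prove this lemma at all: it is quoted verbatim from Wu and Zhang \cite{wu} and used as a black box, so there is no ``paper's proof'' to compare against. Your argument is therefore an independent proof, and it is correct. Part (i) is the standard combination of the sign computations $f(q_1)<0$, $f(q_1+1)\geq 1$ (the telescoping $(q_1+1)^{m}-q_{k-m+1}(q_1+1)^{m-1}\geq (q_1+1)^{m-1}$ works exactly as you describe) together with Descartes' rule.

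For part (ii) your transformation is elegant and the details check: writing $\tfrac{x^k}{\alpha^k}f(\alpha/x)=(1-x)g(x)$ with $b_j=\sum_{i>j}q_i\alpha^{-i}$, the identity $b_j-b_{j+1}=q_{j+1}\alpha^{-(j+1)}$ together with $q_i\leq q_{j+1}$ for $i>j+1$ and the finite geometric sum $(\alpha-1)\sum_{i=j+2}^{k}\alpha^{-i}=\alpha^{-(j+1)}-\alpha^{-k}<\alpha^{-(j+1)}$ gives the strict inequality $b_j/b_{j+1}>\alpha$. Enestr\"om--Kakeya then yields $|z|\geq\min_j b_j/b_{j+1}>\alpha$ for every root of $g$, hence $|\beta|=\alpha/|z|<1$ for every root $\beta\neq\alpha$ of $f$. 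Note in passing that since $|z|>\alpha>1$ for all roots of $g$, we get for free that $z=1$ is not a root of $g$, so $\alpha$ is automatically a simple root of $f$ and the correspondence $\beta\leftrightarrow \alpha/\beta$ between the remaining $k-1$ roots is clean (and $f(0)=-q_k\neq 0$ rules out $\beta=0$). This Enestr\"om--Kakeya route is somewhat slicker than the direct estimates one often sees for this lemma; it trades an ad hoc modulus argument for a clean structural one, at the cost of needing to know that classical theorem.
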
 
 
\begin{lemma}\label{KE}$\cite{wu}$
Let $k\geq2$ and let $\{u_n\}_{n=0}^{\infty}$ be an integer sequence satisfying the recurrence relation given by 
	\begin{equation}
		u_n = q_1u_{n-1} + q_2u_{n-2} + \cdots + q_{k-1}u_{n-k+1} + q_ku_{n-k},\;n>k,
	\end{equation}
where $q_1,q_2,\ldots,q_k \in \mathbb{N}$ with initial conditions $u_i \in\mathbb{N}\cup\{0\}$ for $0 \leq i < k$   and at least one of them is not zero. Then, a formula for $u_n$ may be given by
 	\begin{equation}
 		u_n = c\alpha^n + \mathcal{O}(d^{-n})\;\;\; (n \rightarrow \infty),
 	\end{equation}
 where $c > 0, d > 1,$ and $q_1 < \alpha < q_1 + 1$ is the positive real zero of $f(x).$
 \end{lemma}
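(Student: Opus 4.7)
I would prove Lemma \ref{KE} by combining the standard solution theory for linear recurrences with the root information supplied by Lemma \ref{Q1}, and then establishing the positivity of the leading coefficient from a growth argument on $u_n$.

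First, I would invoke Lemma \ref{Q1} to record exactly what we know about the roots of the characteristic polynomial $f(x)=x^k-q_1x^{k-1}-\cdots-q_k$: there is a unique positive real zero $\alpha$ with $q_1<\alpha<q_1+1$ (in particular $\alpha>1$ since $q_1\geq 1$), and every other complex zero $\alpha_2,\ldots,\alpha_k$ satisfies $|\alpha_i|<1$. The simplicity of $\alpha$ is immediate from the fact that it is the unique positive real root and that $f'(\alpha)>0$ (which one verifies directly from Descartes' rule of signs applied to $f'$, or from the monotonicity argument used in Lemma \ref{Q1}).

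Next, I would write out the general solution of the recurrence. Since the characteristic polynomial has degree $k$ and matches the order of the recurrence, the space of solutions is $k$-dimensional and spanned by terms of the form $n^{j}\alpha_{i}^{n}$, where $0\leq j<$ (multiplicity of $\alpha_i$). Because $\alpha$ is simple, the decomposition takes the shape
\[
u_n \;=\; c\,\alpha^{n} \;+\; \sum_{i=2}^{k} P_i(n)\,\alpha_i^{n},
\]
with each $P_i$ a polynomial of degree strictly less than the multiplicity of $\alpha_i$, and $c$ a constant determined by the initial data. Setting $\rho:=\max_{2\leq i\leq k}|\alpha_i|<1$ and choosing any $d$ with $1<d<1/\rho$, the polynomial factors are absorbed into the exponential estimate, giving
\[
\Bigl|\sum_{i=2}^{k} P_i(n)\,\alpha_i^{n}\Bigr| \;\leq\; C\,\rho^{n}\,\mathrm{poly}(n) \;=\; \mathcal{O}(d^{-n}),
\]
which yields the claimed form $u_n = c\alpha^n + \mathcal{O}(d^{-n})$ as soon as we have shown $c>0$.

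The remaining, and in my view the main, obstacle is showing $c>0$. My plan is to argue as follows. By induction, all terms $u_n$ are nonnegative integers (the $q_i$'s are positive and the $u_i$'s are nonnegative). Since at least one initial value is nonzero and every $q_i\geq 1$, one checks that $u_n\geq 1$ for all sufficiently large $n$; then using $q_1\geq 1$ and $q_2\geq 1$ the recurrence yields $u_n\geq u_{n-1}+u_{n-2}$ once the sequence has become positive, so $u_n\to\infty$. On the other hand, from the decomposition above, if $c\leq 0$ then $u_n = c\alpha^n + \mathcal{O}(d^{-n})$ would be bounded above by $\mathcal{O}(d^{-n})\to 0$ (when $c=0$) or would tend to $-\infty$ (when $c<0$, since $\alpha>1$), contradicting $u_n\to+\infty$. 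Therefore $c>0$, completing the proof. The only subtle point is handling the mild case $q_1=1$ where the inequality $\alpha>1$ still holds because $\alpha>q_1=1$, so the asymptotic dominance of the $c\alpha^n$ term over the decaying terms remains valid.
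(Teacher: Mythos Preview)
The paper does not supply its own proof of this lemma: it is quoted verbatim from Wu and Zhang \cite{wu}, with the ordering hypothesis $q_1\geq q_2\geq\cdots\geq q_k\geq 1$ stated as a standing assumption in the surrounding text. So there is no in-paper argument to compare your proposal against.

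That said, your argument is correct and is the standard route one would expect: invoke Lemma~\ref{Q1} for the root structure, write the general solution of the linear recurrence, bound the contribution of the roots inside the unit disc by $\mathcal{O}(d^{-n})$ for any $d\in(1,1/\rho)$, and rule out $c\leq 0$ by showing $u_n\to\infty$. Two small remarks. First, the simplicity of $\alpha$ follows most cleanly from the counting in Lemma~\ref{Q1} itself (there are $k-1$ zeros strictly inside the unit circle, hence $\alpha$ accounts for exactly one zero); your appeal to $f'(\alpha)>0$ via Descartes or monotonicity is not needed and is slightly circular as written. Second, for the growth step it is worth making explicit that $u_k\geq 1$ (since some $u_j\geq 1$ with $0\leq j\leq k-1$ contributes the term $q_{k-j}u_j\geq 1$), and then $u_n\geq 1$ for all $n\geq k$ by $u_n\geq q_1u_{n-1}\geq u_{n-1}$; after that your Fibonacci-type lower bound $u_n\geq u_{n-1}+u_{n-2}$ applies and gives $u_n\to\infty$.
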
 

We now have the following results.

\begin{theorem}
Let $\{\mathcal{U}_n\}_{n=0}^{\infty}$ be the integer sequence satisfying the recurrence relation \eqref{KH} with initial conditions $\mathcal{U}^{(k)}_i =0$ for all $0 \leq i < k-1, 2\leq k \in \mathbb{N}$ and $\mathcal{U}^{(k)}_{k-1}=1$ with $q_1 \geq q_2 \geq \cdots \geq q_k \geq 1.$ Then, 
	\begin{equation}\label{UN}
 		\mathcal{U}^{(k)}_n = c\alpha^n + \mathcal{O}(d^{-n})\;\;\; (n \rightarrow \infty),
 	\end{equation}
 where $c > 0, d > 1,$ and $\alpha \in (q_1 , q_1 + 1)$ is the positive real zero of $f(x).$ Furthermore,
 	\begin{equation}\label{UKN}
 		\lim_{n\rightarrow \infty}  \mathcal{U}^{(k)}_n/ \mathcal{U}^{(k)}_{n-1} = \alpha.
 	\end{equation}
\end{theorem}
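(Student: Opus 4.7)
The plan is to reduce both claims to the Wu--Zhang lemmas (Lemma \ref{Q1} and Lemma \ref{KE}), which have been stated in the paper precisely to handle this situation.

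For the asymptotic expansion \eqref{UN}, I would first verify that the sequence $\{\mathcal{U}_n^{(k)}\}$ satisfies the hypotheses of Lemma \ref{KE}. The recurrence \eqref{KH} matches the hypothesis of Lemma \ref{KE} verbatim, the coefficients $q_1 \geq q_2 \geq \cdots \geq q_k \geq 1$ are positive integers by assumption, and the initial data $0, 0, \ldots, 0, 1$ are non-negative integers of which at least one (namely $\mathcal{U}_{k-1}^{(k)} = 1$) is nonzero. An immediate invocation of Lemma \ref{KE} then produces constants $c > 0$ and $d > 1$ such that $\mathcal{U}_n^{(k)} = c\alpha^n + \mathcal{O}(d^{-n})$, while Lemma \ref{Q1} identifies $\alpha$ as the unique positive real zero of $f(x)$ and locates it in the interval $(q_1, q_1+1)$.

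For the ratio limit \eqref{UKN}, I would apply the asymptotic formula \eqref{UN} to both $\mathcal{U}_n^{(k)}$ and $\mathcal{U}_{n-1}^{(k)}$ and divide, factoring $\alpha^{n-1}$ from both the numerator and the denominator:
\begin{equation*}
\frac{\mathcal{U}_n^{(k)}}{\mathcal{U}_{n-1}^{(k)}}
= \frac{c\alpha^n + \mathcal{O}(d^{-n})}{c\alpha^{n-1} + \mathcal{O}(d^{-(n-1)})}
= \frac{c\alpha + \mathcal{O}\!\left((\alpha d)^{-n}\right)}{c + \mathcal{O}\!\left((\alpha d)^{-(n-1)}\right)}.
\end{equation*}
Since $\alpha > q_1 \geq 1$ and $d > 1$, we have $\alpha d > 1$, so both error terms tend to zero as $n \to \infty$, and the ratio converges to $c\alpha/c = \alpha$.

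There is no real obstacle here: the essential analytic content is packaged inside the Wu--Zhang lemmas. The only point needing any care is confirming that the remainder terms in the ratio are indeed negligible, which reduces to the elementary inequality $\alpha d > 1$. Everything else is bookkeeping.
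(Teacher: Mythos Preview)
Your proposal is correct and mirrors the paper's own proof essentially step for step: invoke Lemmas \ref{Q1} and \ref{KE} for the asymptotic formula, then divide the formulas for $\mathcal{U}^{(k)}_n$ and $\mathcal{U}^{(k)}_{n-1}$ and factor out $\alpha^{n-1}$ to obtain the ratio limit. Your extra remark that $\alpha d>1$ justifies the vanishing of the error terms is a detail the paper leaves implicit, but otherwise the arguments coincide.
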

\begin{proof}
Equation \eqref{UN} follows directly from Lemmas \ref{Q1} and \ref{KE}. To obtain \eqref{UKN}, we simply use \eqref{UN} and take the limit of the ratio $\mathcal{U}^{(k)}_n/\mathcal{U}^{(k)}_{n-1}$ as $n\rightarrow \infty$; that is, we have the following manipulation:
\begin{align*}
	\lim_{n\rightarrow \infty}  \mathcal{U}^{(k)}_n/ \mathcal{U}^{(k)}_{n-1} 
	&= \lim_{n\rightarrow \infty} \frac{ c\alpha^n + \mathcal{O}(d^{-n})}{ c\alpha^{n-1} + \mathcal{O}(d^{-(n-1)})}\\
	&=\frac{ c\alpha +  \lim_{n\rightarrow \infty} \left(\mathcal{O}(d^{-n})/\alpha^{n-1}\right)}{ c+  \lim_{n\rightarrow \infty} \left(\mathcal{O}(d^{-(n-1)})/\alpha^{n-1}\right)}\\
	&= \alpha. 
\qedhere \end{align*}
\end{proof}
Consequently, we have the following corollary.
\begin{corollary}\label{horadamlike}
Let $\{\mathcal{V}_n\}_{n=0}^{\infty}$ be an integer sequence satisfying \eqref{KH} but with initial conditions $\mathcal{V}^{(k)}_i =a_i$ for all $0 \leq i \leq k-1$ where $a_{i}{}\rq{}s \in \mathbb{N} \cup \{0\}$ with atleast one of them is not zero. Furthermore, assume that  $q_1 \geq q_2 \geq \cdots \geq q_k \geq 1,$ where $2\leq k \in \mathbb{N}$ then 
	\begin{equation}
		\lim_{n\rightarrow \infty}  \mathcal{V}^{(k)}_n/ \mathcal{V}^{(k)}_{n-1} = \alpha,
 	\end{equation} 
where $q_1 < \alpha < q_1 + 1$ is the positive real zero of $f(x).$
\end{corollary}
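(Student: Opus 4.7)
The plan is to observe that the Horadam-like sequence $\{\mathcal{V}^{(k)}_n\}_{n=0}^{\infty}$ satisfies verbatim the hypotheses of Lemma \ref{KE}: it obeys the linear recurrence \eqref{KH} with positive integer coefficients subject to $q_1 \geq q_2 \geq \cdots \geq q_k \geq 1$, its initial values $a_0, a_1, \ldots, a_{k-1}$ are nonnegative integers, and at least one of them is nonzero. Hence the lemma yields an asymptotic expansion
\[
\mathcal{V}^{(k)}_n = c\alpha^n + \mathcal{O}(d^{-n}) \qquad (n \to \infty),
\]
with $c > 0$ and $d > 1$, where $\alpha \in (q_1, q_1+1)$ is the positive real zero of $f(x)$ guaranteed by Lemma \ref{Q1}.

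Once this expansion is in hand, the ratio calculation is essentially a verbatim repetition of the one used for $\mathcal{U}^{(k)}_n/\mathcal{U}^{(k)}_{n-1}$ in the preceding theorem. The plan is to form
\[
\frac{\mathcal{V}^{(k)}_n}{\mathcal{V}^{(k)}_{n-1}} \;=\; \frac{c\alpha^n + \mathcal{O}(d^{-n})}{c\alpha^{n-1} + \mathcal{O}(d^{-(n-1)})},
\]
divide numerator and denominator by $\alpha^{n-1}$, and observe that since $\alpha > q_1 \geq 1$ and $d > 1$ one has $\alpha d > 1$, so the error terms $\mathcal{O}(d^{-n})/\alpha^{n-1}$ and $\mathcal{O}(d^{-(n-1)})/\alpha^{n-1}$ both tend to $0$. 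What remains in the limit is $c\alpha/c = \alpha$, which is the claimed identity.

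The only point requiring a moment's care — and which I would flag as the main (if mild) obstacle — is invoking $c > 0$ rather than merely $c \neq 0$, since this is what prevents the denominator from being degenerate when we pass to the limit. This strict positivity is already part of the conclusion of Lemma \ref{KE}, and is ensured by the sign constraints on both the initial values $a_i$ and the coefficients $q_i$. As an alternative route, one could instead start from Theorem \ref{UKVK}, expressing $\mathcal{V}^{(k)}_n$ as a linear combination of the terms $\mathcal{U}^{(k)}_{n-1-j}$, dividing numerator and denominator of $\mathcal{V}^{(k)}_n/\mathcal{V}^{(k)}_{n-1}$ by $\mathcal{U}^{(k)}_{n-1}$, and passing to the limit using \eqref{UKN} together with the induced ratios $\mathcal{U}^{(k)}_{n-1-j}/\mathcal{U}^{(k)}_{n-1} \to \alpha^{-j}$; this is in precise analogy with the proof of Theorem \ref{GK} and yields the same conclusion.
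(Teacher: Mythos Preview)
Your proposal is correct. Your \emph{primary} route --- applying Lemma~\ref{KE} directly to $\{\mathcal{V}^{(k)}_n\}$ and then repeating the ratio computation from the preceding theorem --- is a genuinely different (and more streamlined) argument than the one the paper has in mind. The paper instead follows the path you sketch as your \emph{alternative}: invoke Theorem~\ref{UKVK} to write $\mathcal{V}^{(k)}_n$ as a fixed linear combination of shifted $\mathcal{U}^{(k)}$-terms, form the ratio $\mathcal{V}^{(k)}_n/\mathcal{V}^{(k)}_{n-1}$, divide through by $\mathcal{U}^{(k)}_{n-1}$, and use \eqref{UKN} to replace each $\mathcal{U}^{(k)}_{n-1-j}/\mathcal{U}^{(k)}_{n-1}$ by $\alpha^{-j}$, exactly mirroring the proof of Theorem~\ref{GK}. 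Your direct approach buys brevity and avoids the representation formula altogether; the paper's approach has the virtue of actually \emph{using} Theorem~\ref{UKVK}, which is the structural result the section is built around, and of making the corollary a genuine consequence of the preceding theorem on $\mathcal{U}^{(k)}$ rather than a parallel application of the same lemma.
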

	\begin{proof}
		The proof uses Theorem \ref{UKVK} and the arguments used are similar to the proof of Theorem \ref{GK}.
	\end{proof}
%%%%%%%%%%%%% IMPORTANT REMARK %%%%%%%%%%%%%
\begin{remark}
	Observe that when $q_i=1$ for all $i=0,1,\ldots, k$ in Corollary \eqref{horadamlike}, then $\lim_{n\rightarrow \infty} r\colon\!\!\!= \lim_{n\rightarrow \infty} F^{(k)}_n/ F^{(k)}_{n-1} = \alpha$, where $1< \alpha <2$. Indeed, the limit of the ratio $r$ is $2$ as $n$ increases.\\
\end{remark}
%%%%%%%%%%%%%%%%%%%%%%%%%%%%%%%%%%%%%%%%%%%%%%%%%%%%%%%%%%%%%%%%%%%%%%%%%%%%%%%%%%%%%%%%%%%%%%%%%%%%%%%%%%%%%%
% K-PERIODIC FIBONACCI SEQUENCES
%%%%%%%%%%%%%%%%%%%%%%%%%%%%%%%%%%%%%%%%%%%%%%%%%%%%%%%%%%%%%%%%%%%%%%%%%%%%%%%%%%%%%%%%%%%%%%%%%%%%%%%%%%%%%%
\subsection*{$k$-Periodic Fibonacci Sequences}
%{\bf $k$-PERIODIC FIBONACCI SEQUENCES}\\	
%%%%%%%%%%%%%	COMMENT		%%%%%%%%%%%%%
%Since the Fibonacci sequence has been generalized in many ways, it is natural to ask whether we could find an analogue of \eqref{GF}, \eqref{VKN}, and \eqref{UN} for these sequences. For instance, 
In \cite{edson}, M. Edson and O. Yayenie gave a generalization of Fibonacci sequence. They called it \emph{generalized Fibonacci sequence} $\{F_n^{(a,b)}\}_{n=0}^{\infty}$ which they defined it by using a non-linear recurrence relation depending on two real parameters $(a, b)$. The sequence 
is defined recursively as
	\begin{equation}\label{edsonfib}
	F_0^{(a,b)}=0, \quad F_1^{(a,b)}=1, \quad  F_n^{(a,b)} = \left\{\begin{array}{cc}
				aF_{n-1}^{(a,b)}+F_{n-2}^{(a,b)},&\text{if}\; n\;\text{is even},\\
				bF_{n-1}^{(a,b)}+F_{n-2}^{(a,b)},&\text{if}\; n\;\text{is odd}.\\
			\end{array}\right.
	\end{equation}
This generalization has its own Binet-like formula and satisfies identities that are analogous to the identities satisfied by the classical Fibonacci sequence (\emph{see} \cite{edson}). A further generalization of this sequence, which is called \emph{$k$-periodic Fibonacci sequence} has been presented by M. Edson, S. Lewis, and O. Yayenie in \cite{lewis}. A related result concerning to two-periodic ternary sequence is presented in \cite{alp} by M. Alp, N. Irmak and L. Szalay. We expect that analogous results of \eqref{GF}, \eqref{VKN}, and \eqref{UN} can easily be found for these generalizations of Fibonacci sequence. For instance, if we alter the starting values of \eqref{edsonfib}, say we start at two numbers $A$ and $B$ and preserve the recurrence relation in \eqref{edsonfib}, then we obtain a sequence that we may call \emph{$2$-periodic Fibonacci-like sequence}, which is defined as follows:
	\begin{equation}\label{edsonfiblike}
	G_0^{(a,b)}=A, \quad G_1^{(a,b)}=B, \quad  G_n^{(a,b)} = \left\{\begin{array}{cc}
				aG_{n-1}^{(a,b)}+G_{n-2}^{(a,b)},&\text{if}\; n\;\text{is even},\\
				bG_{n-1}^{(a,b)}+G_{n-2}^{(a,b)},&\text{if}\; n\;\text{is odd}.\\
			\end{array}\right.
	\end{equation} 
The first few terms of $\{F_n^{(a,b)}\}_{n=0}^{\infty}$ and $\{G_n^{(a,b)}\}_{n=0}^{\infty}$ are as follows:

\[
\begin{array}{l|c|c}
n	&F_n^{(a,b)}		&G_n^{(a,b)}\\\hline\hline
0	&0  				& A \\
1	&1  				& B \\
2	&a  				& aB+A  \\
3	&ab+1  			& (ab+1)B + bA \\
4	&a^2b+2a 			& (a^2b+2a)B + (ab+1)A\\
5	&a^2b^2+3ab+1 		& (a^2b^2+3ab+1)B + (ab^2+2b)A\\
6	&a^3b^2+4 a^2 b +3 a   & (a^3b^2+4 a^2 b +3 a)B + (a^2b^2+3ab+1)A\\
7	&a^3 b^3+5 a^2 b^2+ 6 a b+ 1	& (a^3 b^3+5 a^2 b^2+ 6 a b+ 1)B+ (a^2 b^3 + 4 a b^2 + 3 b) A\\
%\vdots& \vdots & \vdots
\end{array}
\]
Suprisingly, by looking at the table above, $G_n^{(a,b)}$ can be obtained using $F_n^{(a,b)}$ and $F_n^{(b,a)}$. 
%(notice that we have interchanged the parameter $a$ and $b$ in the latter expression).
 More precisely, we have the following result.
\begin{theorem}\label{abG} Let $F_n^{(a,b)}$ and $G_n^{(a,b)}$ be the $n^{th}$ terms of the sequences defined in \eqref{edsonfib} and \eqref{edsonfiblike}, respectively. Then, for all $n \in \mathbb{N}$, the following formula holds
	\begin{equation}\label{abGform}
		G^{(a,b)}_n = G_1^{(a,b)} F_n^{(a,b)} + G_0^{(a,b)} F_{n-1}^{(b,a)}.
	\end{equation}
\end{theorem}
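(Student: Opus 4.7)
The plan is to proceed by induction on $n$, relying on the fact that $G_n^{(a,b)}$ satisfies exactly the same parity-dependent recurrence as $F_n^{(a,b)}$, while $F_n^{(b,a)}$ satisfies the swapped recurrence (the roles of $a$ and $b$ are interchanged between the even-index and odd-index steps). This swap is precisely what the second summand $G_0^{(a,b)} F_{n-1}^{(b,a)}$ accounts for, and identifying why it has to be $F_{n-1}^{(b,a)}$ rather than $F_{n-1}^{(a,b)}$ is the conceptual heart of the argument.

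First I would verify the two base cases $n=1$ and $n=2$ directly from the initial values. For $n=1$, the right-hand side collapses to $B\cdot 1 + A\cdot 0 = B = G_1^{(a,b)}$, and for $n=2$ it gives $B\cdot a + A\cdot 1 = aB+A = G_2^{(a,b)}$. Both match the table displayed just before the theorem, which also provides a useful visual sanity check for higher $n$.

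For the inductive step, I would assume \eqref{abGform} holds for all indices smaller than $n$ and split into two cases according to the parity of $n$. If $n$ is even, I substitute the inductive hypothesis into $G_n^{(a,b)} = aG_{n-1}^{(a,b)} + G_{n-2}^{(a,b)}$ and regroup the terms by the coefficients $B$ and $A$. The $B$-part becomes $aF_{n-1}^{(a,b)} + F_{n-2}^{(a,b)}$, which is $F_n^{(a,b)}$ since $n$ is even. The $A$-part becomes $aF_{n-2}^{(b,a)} + F_{n-3}^{(b,a)}$; here I would use that $n-1$ is odd, so in the sequence $F^{(b,a)}$ the recurrence at an odd index uses coefficient $a$ (because $a$ and $b$ have been swapped), giving exactly $F_{n-1}^{(b,a)}$. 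The odd-$n$ case is symmetric: $G_n^{(a,b)} = bG_{n-1}^{(a,b)} + G_{n-2}^{(a,b)}$ produces the $B$-term $F_n^{(a,b)}$ from the odd-index recurrence of $F^{(a,b)}$, while the $A$-term $bF_{n-2}^{(b,a)} + F_{n-3}^{(b,a)}$ equals $F_{n-1}^{(b,a)}$ because $n-1$ is even and the even-index recurrence of $F^{(b,a)}$ uses coefficient $b$.

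The only real obstacle is bookkeeping: one must keep straight which parity triggers which coefficient in each of the four sequences $G^{(a,b)}$, $F^{(a,b)}$, $F^{(b,a)}$ at the indices $n$, $n-1$, $n-2$, $n-3$. Once the parity pattern is lined up carefully, both cases reduce to one line of algebra, and the induction closes. No deeper identity is needed.
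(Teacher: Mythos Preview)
Your proof is correct and follows essentially the same route as the paper: induction on $n$ with a parity split, substituting the inductive hypothesis into the recurrence for $G^{(a,b)}$ and regrouping the $G_1^{(a,b)}$- and $G_0^{(a,b)}$-coefficients so that the recurrences for $F^{(a,b)}$ and $F^{(b,a)}$ (at the appropriate parity) rebuild $F_n^{(a,b)}$ and $F_{n-1}^{(b,a)}$. The paper phrases the step as passing from indices $n-1,\,n$ to $n+1$ and writes out only one parity case explicitly, but the computation is identical to yours.
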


\begin{proof}
	The proof is by induction on $n$. Evidently, the formula holds for $n=0,1,2$. We suppose that the formula also holds for some $n \geq 2$. Hence, we have
		\begin{align*}
			G^{(a,b)}_{n-1} 	&= G_1^{(a,b)} F_{n-1}^{(a,b)} + G_0^{(a,b)} F_{n-2}^{(b,a)},\\
			G^{(a,b)}_n 	&= G_1^{(a,b)} F_n^{(a,b)} + G_0^{(a,b)} F_{n-1}^{(b,a)}.
		\end{align*}
	Suppose that $n$ is even. (The case when $n$ is odd can be proven similarly.) So we have
		\begin{align*}
			G^{(a,b)}_{n+1} 	&= aG^{(a,b)}_n + G^{(a,b)}_{n-1}\\
						&= a \left( G_1^{(a,b)} F_n^{(a,b)} + G_0^{(a,b)} F_{n-1}^{(b,a)}\right) + \left( G_1^{(a,b)} F_{n-1}^{(a,b)} + G_0^{(a,b)} F_{n-2}^{(b,a)}\right)\\
						&= G_1^{(a,b)} \left( a F_n^{(a,b)} +  F_{n-1}^{(a,b)} \right) +G_0^{(a,b)} \left( a F_{n-1}^{(b,a)} + F_{n-2}^{(b,a)}\right)\\
						&= G_1^{(a,b)} F_{n+1}^{(a,b)} + G_0^{(a,b)} F_n^{(b,a)},
		\end{align*}
	proving the theorem.
\end{proof}

%%%%%%%%%%%%%%%%%%%%%%%%%%%%%%%%%%%%%%%%%%%%%%%%%%%%%%%%%%%%%%%%%%%%%%%%%%%%%%%%%%%%%%%%%%%%%%%
	The sequence $\{G_n^{(a,b)}\}_{n=0}^{\infty}$ has already been studied in (\cite{edson}, Section 4). The authors \cite{edson} have related the two sequences $\{F_n^{(a,b)}\}_{n=0}^{\infty}$ and $\{G_n^{(a,b)}\}_{n=0}^{\infty}$ using the formula
	\begin{equation}\label{abGform2}
		G_n^{(a,b)}= G_1^{(a,b)}F_n^{(a,b)}+G_0^{(a,b)}\left(\frac{b}{a}\right)^{n-2\lfloor n/2\rfloor}F_{n-1}^{(a,b)}.
	 \end{equation}
Notice that by simply comparing the two identities \eqref{abGform} and \eqref{abGform2}, we see that
	\[
		F_{n-1}^{(b,a)}=\left(\frac{b}{a}\right)^{n-2\lfloor n/2\rfloor}F_{n-1}^{(a,b)}, \quad \forall n \in \mathbb{N}.
	\]
The convergence property of $\{F_{n+1}^{(a,b)}/F_n^{(a,b)}\}_{n=0}^{\infty}$ has also been discussed in (\cite{edson}, Remark 2). It was shown that, for $a=b$,  we have
	\begin{equation}\label{conv}
		\frac{F_{n+1}^{(a,b)}}{F_n^{(a,b)}} \longrightarrow \frac{\alpha}{a}=\frac{a+\sqrt{a^2+4}}{2}\quad \text{as}\quad n\longrightarrow \infty.  
	\end{equation}
Using \eqref{abGform} and \eqref{conv}, we can also determine the limit of the sequence $\{G_{n+1}^{(a,b)}/G_n^{(a,b)}\}$ as $n$ tends to infinity, and for $a=b$, as follows:
	\begin{align*}
		\lim_{n\rightarrow \infty}\frac{G_{n+1}^{(a,b)}}{G_n^{(a,b)}}		
			&= \lim_{n\rightarrow \infty}\frac{G_1^{(a,b)} F_{n+1}^{(a,b)} + G_0^{(a,b)} F_n^{(b,a)}}{G_1^{(a,b)} F_n^{(a,b)} + G_0^{(a,b)} F_{n-1}^{(b,a)}}
			= \lim_{n\rightarrow \infty}\frac{G_1^{(a,a)} F_{n+1}^{(a,a)} + G_0^{(a,a)} F_n^{(a,a)}}{G_1^{(a,a)} F_n^{(a,a)} + G_0^{(a,a)} F_{n-1}^{(a,a)}}\\
			&= \frac{G_1^{(a,a)}\lim_{n\rightarrow \infty}\frac{ F_{n+1}^{(a,a)}}{F_n^{(a,a)}} + G_0^{(a,a)} }{G_1^{(a,a)}  + G_0^{(a,a)} \lim_{n\rightarrow \infty}\frac{F_{n-1}^{(a,a)}}{F_n^{(a,a)}}}
			=\frac{\alpha a^{-1}G_1^{(a,a)} + G_0^{(a,a)} }{G_1^{(a,a)} + a \alpha^{-1} G_0^{(a,a)} }=\displaystyle\frac{\alpha}{a}.
	\end{align*}
For the case $a \neq b$, the ratio of successive terms of $\{F_n^{(a,b)}\}$ does not converge. However, it is easy to see that
	\[
		\frac{F_{2n}^{(a,b)}}{F_{2n-1}^{(a,b)}} \longrightarrow \frac{\alpha}{b},\quad \frac{F_{2n+1}^{(a,b)}}{F_{2n}^{(a,b)}} \longrightarrow \frac{\alpha}{a}, \quad \text{and}
			\quad \frac{F_{n+2}^{(a,b)}}{F_n^{(a,b)}} \longrightarrow \alpha + 1,
	\]
where $\alpha=(ab+\sqrt{a^2b^2+4ab})/2$ (cf. \cite{edson}). Knowing all these limits, we can investigate the convergence property of the sequences $\{G_{2n}^{(a,b)}/G_{2n-1}^{(a,b)}\}, \;\{G_{2n+1}^{(a,b)}/G_{2n}^{(a,b)}\}$, and $\{G_{n+2}^{(a,b)}/G_n^{(a,b)}\}$. Notice that $F_n^{(a,b)}=F_n^{(b,a)}$ for every $n \in \{1,3,5, \ldots\}$. So 
\begin{comment}
	\begin{align*}
		\lim_{n\rightarrow \infty}\frac{G_{2n+1}^{(a,b)}}{G_{2n}^{(a,b)}}		
			&= \lim_{n\rightarrow \infty}\frac{G_1^{(a,b)} F_{2n+1}^{(a,b)} + G_0^{(a,b)} F_{2n}^{(b,a)}}{G_1^{(a,b)} F_{2n}^{(a,b)} + G_0^{(a,b)} F_{2n-1}^{(b,a)}}\\
			&= \lim_{n\rightarrow \infty} \frac{F_{2n+1}^{(a,b)}}{F_{2n-1}^{(b,a)}} \frac{G_1^{(a,b)} + G_0^{(a,b)} \frac{F_{2n}^{(b,a)}}{F_{2n+1}^{(a,b)}}}{G_1^{(a,b)} \frac{F_{2n}^{(a,b)}}{F_{2n-1}^{(b,a)}} + G_0^{(a,b)}}\\
			&= \lim_{n\rightarrow \infty} \frac{F_{2n+1}^{(a,b)}}{F_{2n-1}^{(a,b)}} \frac{G_1^{(a,b)} + G_0^{(a,b)} \frac{F_{2n}^{(b,a)}}{F_{2n+1}^{(b,a)}}}{G_1^{(a,b)} \frac{F_{2n}^{(a,b)}}{F_{2n-1}^{(a,b)}} + G_0^{(a,b)}}\\
			&= (\alpha+1) \left( \frac{G_1^{(a,b)} + b\alpha^{-1}G_0^{(a,b)} }{\alpha b^{-1}G_1^{(a,b)}+ G_0^{(a,b)}}\right)
			= \frac{\alpha(\alpha+1)}{b}.
	\end{align*}
\end{comment}	
	\begin{align*}
		\lim_{n\rightarrow \infty}\frac{G_{2n}^{(a,b)}}{G_{2n-1}^{(a,b)}}		
			&= \lim_{n\rightarrow \infty}\frac{G_1^{(a,b)} F_{2n}^{(a,b)} + G_0^{(a,b)} F_{2n-1}^{(b,a)}}{G_1^{(a,b)} F_{2n-1}^{(a,b)} + G_0^{(a,b)} F_{2n-2}^{(b,a)}}
			= \lim_{n\rightarrow \infty} \frac{G_1^{(a,b)} \frac{F_{2n}^{(b,a)}}{F_{2n-1}^{(a,b)}}+ G_0^{(a,b)}}{G_1^{(a,b)}+ G_0^{(a,b)} \frac{F_{2n-2}^{(a,b)}}{F_{2n-1}^{(b,a)}}}\\
			&= \lim_{n\rightarrow \infty} \frac{G_1^{(a,b)} \frac{F_{2n}^{(b,a)}}{F_{2n-1}^{(b,a)}}+ G_0^{(a,b)}}{G_1^{(a,b)}+ G_0^{(a,b)} \frac{F_{2n-2}^{(a,b)}}{F_{2n-1}^{(a,b)}}}
			= \frac{ \alpha a^{-1}G_1^{(a,b)}+ G_0^{(a,b)}}{G_1^{(a,b)}+ a\alpha^{-1}G_0^{(a,b)}}
			=\frac{\alpha}{a}.
	\end{align*}
Similarly, it can be shown that $G_{2n+1}^{(a,b)}/G_{2n}^{(a,b)} \rightarrow \alpha/b$ and $G_{n+2}^{(a,b)}/G_n^{(a,b)} \rightarrow \alpha+1$ as $n\rightarrow \infty$.  

The recurrence relations discussed above can easily be extended into subscripts with real numbers. For instance, consider the piecewise defined function $G_{\lfloor x\rfloor}^{(a,b)}$: 
	\begin{equation}\label{gofx}
	G_0^{(a,b)}=A, \; G_1^{(a,b)}=B, \;  G_{\lfloor x\rfloor}^{(a,b)} = \left\{\begin{array}{cc}
				aG_{{\lfloor x\rfloor}-1}^{(a,b)}+G_{{\lfloor x\rfloor}-2}^{(a,b)},&\text{if}\; {\lfloor x\rfloor}\;\text{is even},\\
					&\\
				bG_{{\lfloor x\rfloor}-1}^{(a,b)}+G_{{\lfloor x\rfloor}-2}^{(a,b)},&\text{if}\; {\lfloor x\rfloor}\;\text{is odd}.\\
			\end{array}\right.
	\end{equation} 

Obviously, the properties of \eqref{edsonfiblike} will be inherited by \eqref{gofx}. For example, suppose $G_0^{(a,b)}=2, \; G_1^{(a,b)}=3, \; a=0.2$, and $b=0.3$. Then, $G^{(.2,.3)}_{\lfloor x\rfloor} = G_1^{(.2,.3)} F_{\lfloor x\rfloor}^{(.2,.3)} + G_0^{(.2,.3)} F_{{\lfloor x\rfloor}-1}^{(.3,.2)}$. Also, 
	\[	
		\lim_{x\rightarrow \infty}\frac{G_{2{\lfloor x\rfloor}}^{(.2,.3)}}{G_{2{\lfloor x\rfloor}-1}^{(.2,.3)}} 	= 1.3839, \quad
		\lim_{x\rightarrow \infty}\frac{G_{2{\lfloor x\rfloor}+1}^{(.2,.3)}}{G_{2{\lfloor x\rfloor}}^{(.2,.3)}} 	= 0.921886,\quad
		\lim_{x\rightarrow \infty}\frac{G_{{\lfloor x\rfloor}+2}^{(.2,.3)}}{G_{{\lfloor x\rfloor}}^{(.2,.3)}}		= 1.276807. 
	\]
If $a=b=0.1$, then the ratio of successive terms of $\{G_n^{(.1,.1)}\}$ with $G_0^{(.1,.1)}=2$ and $G_1^{(.1,.1)}=3$ converges to $1.05125$. See Figure \ref{fig} for the plots of these limits.
%-------------------------------------------- 	EXAMPLE --------------------------------------------
\begin{figure}[h!]
\begin{multicols}{2} 
    \includegraphics[width=.45\textwidth]{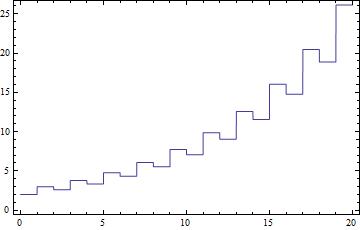}
    \includegraphics[width=.45\textwidth]{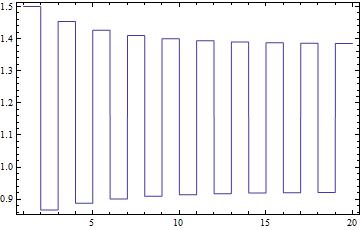}
    \includegraphics[width=.45\textwidth]{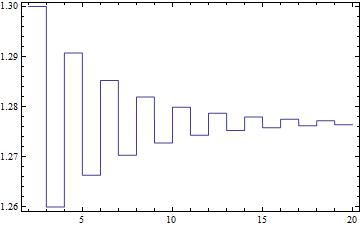}
    \includegraphics[width=.45\textwidth]{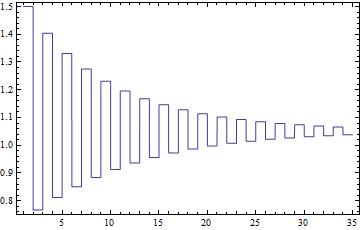}    
    \end{multicols}
	\caption{$G_0 = 2, G_1=3, a=0.2, b=0.3;\; G_0 = 2, G_1=3, a=0.1, b=0.1$.}\label{fig}
\end{figure}

%------------------------------------------ FURTHER GENERALIZTIONS AND OBSERVATIONS ------------------------------------------------
Now, we may take the generalized Fibonacci sequence \eqref{edsonfib} a bit further by considering a 3-periodic ternary recurrence sequence related to the usual Tribonacci sequence:
	\begin{align}
	&\quad\quad T_0^{(a,b,c)}=0, \quad T_1^{(a,b,c)}=0, \quad T_2^{(a,b,c)}=1, \nonumber\\ 
		 T_n^{(a,b,c)} = 
		 	&\left\{\begin{array}{cc}
				a T_{n-1}^{(a,b,c)} + T_{n-2}^{(a,b,c)} + T_{n-3}^{(a,b,c)},&\text{if}\; n \equiv 0\; (\text{mod}\; 3),\\
				b T_{n-1}^{(a,b,c)} + T_{n-2}^{(a,b,c)} + T_{n-3}^{(a,b,c)},&\text{if}\; n \equiv 1\; (\text{mod}\; 3),\\
				c T_{n-1}^{(a,b,c)} + T_{n-2}^{(a,b,c)} + T_{n-3}^{(a,b,c)},&\text{if}\; n \equiv 2\; (\text{mod}\; 3).\\
			\end{array}\right. (n\geq 3) \label{tribo}
	\end{align}
Suppose we define the sequence $\{U_n^{(a,b,c)}\}_{n=0}^{\infty}$ satisfying the same recurrence equation as in \eqref{tribo} but with arbitrary initial conditions $U_0^{(a,b,c)}, \; U_1^{(a,b,c)}, \;$ and $U_2^{(a,b,c)}$. Then, the sequences $\{U_n^{(a,b,c)}\}_{n=0}^{\infty}$ and $\{T_n^{(a,b,c)}\}_{n=0}^{\infty}$ are related as follows:
 	\[
 		U_n^{(a,b,c)} = U_0^{(a,b,c)} T_{n-1}^{(b,c,a)}+ U_1^{(a,b,c)} \left( T_{n-1}^{(b,c,a)}+T_{n-2}^{(c,a,b)}\right) + U_2^{(a,b,c)} T_n^{(a,b,c)}, \quad (n \geq 2).
 	\]
 	%----------------------------------- REMARK ON GENERALIZATION --------------------------------------------
\begin{remark}
	In general, the $k$-periodic $k$-nary sequence $\{\mathcal{F}_n^{(a_1,a_2,\ldots,a_k)}\}\colon\!\!\!=\{\mathfrak{F}_n^{(k)}\}$ related to $k$-nacci sequence:
	\begin{align}
	&\mathfrak{F}_0^{(k)}=\mathfrak{F}_1^{(k)}=\cdots=\mathfrak{F}_{k-2}^{(k)}=0, \quad \mathfrak{F}_{k-1}^{(k)}=1, \nonumber\\ 
		 &\mathfrak{F}_n^{(k)} = 
		 	\left\{\begin{array}{cc}
				a_1 \mathfrak{F}_{n-1}^{(k)} + \sum_{j=2}^k \mathfrak{F}_{n-j}^{(k)},&\text{if}\; n \equiv 0\; (\text{mod}\; k),\\
				a_2 \mathfrak{F}_{n-1}^{(k)} + \sum_{j=2}^k \mathfrak{F}_{n-j}^{(k)},&\text{if}\; n \equiv 1\; (\text{mod}\; k),\\
					&\vdots\\
				a_k \mathfrak{F}_{n-1}^{(k)} + \sum_{j=2}^k \mathfrak{F}_{n-j}^{(k)},&\text{if}\; n \equiv -1\; (\text{mod}\; k).\\
			\end{array}\right. (n\geq k) \label{kbo}
	\end{align}
	and the sequence $\{\mathcal{G}_n^{(a_1,a_2,\ldots,a_k)}\}\colon\!\!\!=\{\mathfrak{G}_n^{(k)}\}$ defined in the same recurrence equation \eqref{kbo} but with arbitrary initial conditions $\mathfrak{G}_0^{(k)}, \mathfrak{G}_1^{(k)}, \ldots, \mathfrak{G}_{k-1}^{(k)}$ are related in the following fashion:
	\begin{equation}\label{genGF}
		\mathfrak{G}^{(k)}_n = \mathfrak{G}^{(k)}_0 \mathcal{F}^{(k;1)}_{n-1}+\sum_{m=0}^{k-3}\left(\mathfrak{G}^{(k)}_{m+1}\sum_{j=0}^{m+1} \mathcal{F}^{(k;j+1)}_{n-1-j}\right) + \mathfrak{G}^{(k)}_{k-1}\;\mathfrak{F}^{(k)}_n,
	\end{equation}
where $(k;j)\colon\!\!\!=(a_{j+1},a_{j+2},\ldots,a_k, a_1,a_2,\ldots,a_j), j=0,1,\ldots,k-1$. This result can be proven by mathematical induction and we leave this to the interested reader.
\end{remark}

%%%%%%%%%% REFERENCES %%%%%%%%%%%%

{\bf Received: March 3, 2015}


\begin{thebibliography}{99}

\bibitem{alp} \textsc{Alp, M., Irmak, N., and Szalay, L.}, Two-periodic ternary recurrences and their binet-formula, \emph{ Acta Math. Univ. Comenianae}, Vol. LXXXI, {\bf 2} (2012), pp. 227--232.

\bibitem{dresden} \textsc{Dresden, G. P. B.}, A Simplifed Binet Formula for $k$-Generalized Fibonacci Numbers, \emph{ J. Integer Sequences,} {\bf 19}, (2013).

\bibitem{edson} \textsc{Edson, M., Yayenie, O.}, A New Generalization of Fibonacci Sequence and Extended Binet’s Formula,  \emph{ Integers}, {\bf 9} (\# A48) (2009), pp. 639--654.

\bibitem{lewis} \textsc{Edson, M., Lewis, S., Yayenie, O.}, The $k$-periodic Fibonacci sequence and an extended Binet's formula, \emph{ Integers}, {\bf 11} (\# A32) (2011), pp. 639--652.

\bibitem{horadam} \textsc{Horadam, A. F.}, Basic properties of certain generalized sequence of numbers, \emph{ Fibonacci Quarterly,} {\bf 3} (1965), pp. 161--176.

\bibitem{koshy} \textsc{Koshy, T.}, \emph{ Fibonacci and Lucas Numbers with Applications}, John Wiley, New York, 2001.

\bibitem{noe} \textsc{Noe, Tony; Piezas, Tito III; and Weisstein, Eric W.} "Fibonacci n-Step Number." From{ \emph{ MathWorld}--A Wolfram Web Resource.} \texttt{http://mathworld.wolfram.com/Fibonaccin-StepNumber.html}

\bibitem{wu} \textsc{Wu, Z., Zhang, H.}, On the reciprocal sums of higher-order sequences, \emph{ Adv. Diff. Equ.,} {\bf 2013,} 2013:189.
\end{thebibliography}
\end{document}